\newtheorem{thm}{Theorem}[section]
\newtheorem{cor}[thm]{Corollary}
\newtheorem{ques}[thm]{Question}
\theoremstyle{definition}
\newtheorem{proposition}[thm]{Proposition}
\theoremstyle{remark}
\renewcommand{\d }{{\rm d} }
\renewcommand{\H}{\mathcal{H}}
\newcommand{\AH}{\mathcal{AH}}
\newcommand{\AHG}{\mathcal{AH}(G)}
\newcommand{\HG}{\mathcal{H}(G)}
\newcommand{\GG}{\mathcal{G}(G)}
\newcommand{\acts}{\curvearrowright}
\newcommand{\Ga}{\Gamma}
\newcommand{\op}{\operatorname}
\begin{document}

\title{Finite extensions of $\H-$ and $\AH$-accessible groups }
\author{S. H Balasubramanya}
\date{}
\maketitle

\begin{abstract}We prove that the group properties of being $\H-$accessible and $\AH$-accessible are preserved under finite extensions. We thus answer an open question from \cite{ABO}. 
\end{abstract}

\maketitle

\section{\bf Introduction}
An important open question related to the class of acylindrically hyperbolic groups is the following (See \cite[Question 2.20]{survey}). 

\begin{ques}\label{mainq} Is the property of being acylindrically hyperbolic preserved under quasi-isometries for finitely generated groups? In other words, if $G$ is a finitely generated acylindrically hyperbolic group, and $H$ is a finitely generated group that is quasi-isometric to $G$, then is $H$ also an acylindrically hyperbolic group ? \end{ques} 

A group $G$ is called \emph{acylindrically hyperbolic} if it admits a non-elementary, acylindrical action on a hyperbolic space. The motivation behind the following question comes from the observation that the class of acylindrically hyperbolic groups serves as a generalization to the classes of non-elementary hyperbolic and relatively hyperbolic groups. The latter two classes are preserved under quasi-isometries (see \cite{gromov}and \cite[Theorem 5.12]{drutu}), making Question \ref{mainq} a natural question to consider. 

An answer to this question seems currently out of reach given the lack of techniques to build an action of $H$ on a hyperbolic space simply starting from an action of $G$ on a (possibly different) hyperbolic space and a quasi-isometry between the two groups. Indeed, this is hard to do even in the case of actions on Cayley graphs that arise from finite generating sets. By \cite[Theorem 1.2]{ah}, if $G$ is acylindrically hyperbolic, then there exists a hyperbolic Cayley graph, say $\Ga(G, A)$, such that $G \acts \Ga(G,A)$ is acylindrical and non-elementary. If $G$ is acylindrically hyperbolic, but not a hyperbolic group, then $A$ is necessarily infinite. Suppose that the groups $G$ and $H$ are quasi-isometric with respect to some finite  generating sets $X$ and $Y$ respectively.  Then the lack of a relationship between $G \acts \Ga(G,X)$ and $G \acts \Ga(G, A)$ makes it unclear as to how we may transfer the desired properties to the group $H$. 

Since the answer to this general question is out of reach, we may consider some special cases of Question \ref{mainq} instead. For instance, we may ask the following.  
 
\begin{ques}\label{simplerq} Let $H <G$ be a finite index subgroup. If $H$ is acylindrically hyperbolic, then is $G$ acylindrically hyperbolic ? 
\end{ques}

While it seems that this question is much simpler, in actuality it is also hard to answer. The primary obstruction here is the lack of any techniques that allow us to build an action of the parent group $G$ starting from the action of a finite index subgroup $H$ while retaining the properties of hyperbolicity and acylindricity. Indeed,  \cite{AHO} and sections of \cite{ABO} explore the possibility of extending group actions in certain settings, especially from peripheral and hyperbolically embedded subgroups, but those methods do not generalize to the finite index case. It is also worth noting, as shown in \cite[Example 1.6]{AHO}, that the extension problem may not be solvable even for finite index subgroups of finitely generated groups. 

However, the authors of \cite{errata} proved that the answer to Question \ref{simplerq} is affirmative in the special case when the finite index subgroup of $G$ is normal and $\AH-$accessible. 

\begin{proposition}\cite[Lemma 6]{errata}\label{wik} Suppose that a group $G$ contains a normal acylindrically hyperbolic subgroup $H$
of finite index which is $\AH$-accessible. Then $G$ is acylindrically hyperbolic.
\end{proposition}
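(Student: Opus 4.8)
The plan is to exploit the uniqueness of the largest acylindrical hyperbolic structure on $H$ to show that this structure is invariant under conjugation by $G$, and then to promote that invariance into an honest isometric action of $G$ on a hyperbolic space. First I would use $\AH$-accessibility of $H$ to fix the largest element $[A]\in\AH(H)$; here $A$ is a (generally infinite) generating set of $H$ such that $\Ga(H,A)$ is hyperbolic and $H\acts\Ga(H,A)$ is acylindrical. Since $H$ is acylindrically hyperbolic, $\AH(H)$ contains a non-elementary acylindrical structure, and because $[A]$ dominates it, the largest action $H\acts\Ga(H,A)$ is itself non-elementary.

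Next, because $H\triangleleft G$, each $g\in G$ acts on $H$ by the automorphism $c_g\colon h\mapsto ghg^{-1}$. Conjugation carries generating sets to generating sets and preserves acylindricity and hyperbolicity of the associated Cayley graphs as well as the domination order, so $c_g$ induces an order automorphism of the poset $\AH(H)$. An order automorphism fixes the unique largest element, whence $[gAg^{-1}]=[A]$ for every $g\in G$; that is, the structure $[A]$ is $G$-invariant.

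The final and main step is to manufacture from this a hyperbolic space on which $G$ acts acylindrically and non-elementarily, which by definition shows that $G$ is acylindrically hyperbolic. The natural candidate is the Cayley graph $\Ga(G,Y)$ with $Y=A\cup T$, where $T$ is a finite transversal of $H$ in $G$. One checks that $H$ is coarsely dense in $\Ga(G,Y)$, and one wants the inclusion $\Ga(H,A)\hookrightarrow\Ga(G,Y)$ to be a quasi-isometry. Granting this, $\Ga(G,Y)$ is hyperbolic; acylindricity of $G\acts\Ga(G,Y)$ follows from acylindricity of $H\acts\Ga(H,A)$ together with $[G:H]<\infty$ (an element moving two far-apart points only a little lies in one of the finitely many cosets $tH$, so the relevant counts are multiplied by at most $|T|$); and non-elementarity is inherited from the $H$-action, using that $G$ is not virtually cyclic since $H$ is not.

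I expect this last step to be the main obstacle. The difficulty is that $G$-invariance of $[A]$ only guarantees that each $c_g$ is a \emph{quasi}-isometry of $\Ga(H,A)$, not an isometry; the multiplicative and additive distortion can then compound along a word in $Y$ whose coset bookkeeping forces conjugation by products of transversal elements, so the embedding $\Ga(H,A)\hookrightarrow\Ga(G,Y)$ need not be undistorted and $\Ga(G,Y)$ is not obviously hyperbolic (the naive argument is clean only when the extension splits, so that the transversal can be taken to be a finite subgroup acting by genuine isometries on a suitable invariant representative). The heart of the argument is therefore to upgrade quasi-invariance to uniform control: to produce, within the class $[A]$, a representative on which $G$ acts by honest isometries, i.e.\ a $G$-conjugation-invariant model of the largest structure, so that the metric comparison becomes uniform. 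Once such an isometric $G$-action realizing $[A]$ is in hand, assembling it with $T$ into a hyperbolic $G$-space and verifying acylindricity and non-elementarity is routine.
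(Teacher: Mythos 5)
Your setup matches the actual argument (which the paper reproduces, in the $\H$-accessibility setting, inside the proof of Proposition \ref{adprop}): fix the largest element $[A]\in\AH(H)$, observe that conjugation by elements of $G$ induces order automorphisms of $\AH(H)$ which must fix the unique largest element, and aim to show that the inclusion $\Ga(H,A)\hookrightarrow\Ga(G,A\cup T)$ is an $H$-equivariant quasi-isometry. But you stop exactly at the step that constitutes the proof, declare it "the main obstacle," and propose to resolve it by producing a representative of $[A]$ on which $G$ acts by honest isometries. That is a genuine gap, and moreover the obstacle you describe --- distortion "compounding along a word in $Y$ whose coset bookkeeping forces conjugation by products of transversal elements" --- does not actually arise. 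The standard Schreier-type rewriting kills it: given $h\in H$ and a shortest word $a_1\cdots a_n$ in $(A\cup T)^{\pm 1}$ representing $h$, set $w_i=a_1\cdots a_i$ and choose $y_i\in T$ with $w_iy_i^{-1}\in H$; then
$$h=(y_0a_1y_1^{-1})(y_1a_2y_2^{-1})\cdots(y_{n-1}a_ny_n^{-1}),$$
and each factor lies in $H$ and involves conjugation by \emph{single} elements of the fixed finite transversal, never by products. In fact, when $a_i\in A$ one checks $y_{i-1}=y_i$, so the factor is $y_i^{-1}a_iy_i$ up to relabeling, and since $[y^{-1}Ay]=[A]$ for each of the finitely many $y\in T$, there is one uniform constant $L$ with $|y^{-1}ay|_A\leq L$; when $a_i\in T$ the factor is a word of length at most $3$ in the finite set $T$ lying in $H$, hence of $A$-length at most some $N$. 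This gives $|h|_A\leq \max\{L,N\}\cdot|h|_{A\cup T}$ directly: the multiplicative constants do not accumulate because each letter of the word contributes one bounded factor, not one application of a quasi-isometry to everything that came before.

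Your proposed fix --- finding a $G$-conjugation-invariant generating set in the class $[A]$ --- is both unnecessary and not obviously achievable (the natural candidate $\bigcup_{y\in T}y^{-1}Ay$ is equivalent to $A$ but is still only quasi-preserved by conjugation by elements of $H$, so you would be chasing your tail). The remaining assertions in your sketch (coarse density of $H$, transfer of acylindricity using finiteness of the index, inheritance of non-elementarity) are fine once the quasi-isometry is in hand, and your observation that $[A]$ is fixed by the induced automorphisms is precisely the input the real proof uses to get the uniform bound $L$. So the verdict is: right strategy, but the step you flagged as the heart of the argument is exactly the one you needed to carry out, and the elementary telescoping argument above is how it is done.
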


A group $G$ is said to be \emph{$\AH-$accessible} if the poset $\AH(G)$ contains the largest element. (Note that largest elements, if they exist, are unique.) We will show that under the hypothesis of Proposition \ref{wik}, something even stronger holds true : the group $G$ is also $\AH-$accessible. Thus, we have the following. 

\begin{proposition}\label{wtp} Suppose that a group $G$ contains a normal acylindrically hyperbolic subgroup $H$ of finite index which is $\AH$-accessible. Then $G$ is also $\AH-$accessible.
\end{proposition}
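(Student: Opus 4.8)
The plan is to exhibit an explicit largest element of $\AH(G)$ by extending the largest acylindrical structure on $H$, and then to verify its maximality by transporting comparisons between $G$ and $H$ through the finite-index inclusion. Throughout, fix a finite right transversal $T$ of $H$ in $G$, so that $G=\bigsqcup_{t\in T}Ht$, and let $L_H\in\AH(H)$ denote the largest element guaranteed by $\AH$-accessibility of $H$. The first step is to observe that $L_H$ is $G$-invariant. Since $H\trianglelefteq G$, conjugation $c_g$ by any $g\in G$ is an automorphism of $H$, and every automorphism $\phi$ of $H$ induces an order-automorphism of the poset $\AH(H)$ (sending the structure represented by a generating set $Y$ to the one represented by $\phi(Y)$). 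An order-automorphism must fix the unique largest element, so $c_g(L_H)=L_H$ for all $g\in G$.

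Next I would build the candidate largest structure. Using the $G$-invariance of $L_H$ exactly as in the extension construction underlying Proposition \ref{wik}, the cobounded acylindrical $H$-action representing $L_H$ extends to a non-elementary acylindrical action of $G$ on a hyperbolic space; let $L_G\in\AH(G)$ be the resulting hyperbolic structure. I would arrange this so that restricting the extended $G$-action back to $H$ recovers $L_H$ on the nose.

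The core of the argument is the restriction map $r\colon\AH(G)\to\AH(H)$ sending a structure $[B]$ to the structure $[B]\big|_{H}$ of the restricted $H$-action. I would verify three properties. First, $r$ is well-defined: restricting a cobounded, acylindrical, non-elementary $G$-action to the finite-index subgroup $H$ remains cobounded (since $Tx$ is bounded, the $H$-orbit stays coarsely dense), acylindrical and hyperbolic (the space is unchanged), and non-elementary (suitable powers of two independent loxodromics of $G$ lie in $H$ and remain independent loxodromics). Second, $r$ is order-preserving, which is immediate because the $H$-metric is literally a restriction of the $G$-metric. Third, and crucially, $r$ is order-reflecting: if $r([B_1])\preceq r([B_2])$, say $|h|_{B_1}\le C|h|_{B_2}+C$ for all $h\in H$, then writing $g=ht$ with $t\in T$ and using finiteness of $T$ gives $|g|_{B_1}\le |h|_{B_1}+O(1)\le C|h|_{B_2}+O(1)\le C|g|_{B_2}+O(1)$, so that $[B_1]\preceq[B_2]$ in $\H(G)$. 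With these in hand the conclusion is immediate: for an arbitrary $[B]\in\AH(G)$, well-definedness gives $r([B])\in\AH(H)$, so maximality of $L_H$ yields $r([B])\preceq L_H=r(L_G)$, and order-reflection then gives $[B]\preceq L_G$. Hence $L_G$ is the largest element of $\AH(G)$ and $G$ is $\AH$-accessible.

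The step I expect to be the main obstacle is the extension producing $L_G$ together with the verification that it restricts back to $L_H$ exactly, rather than to some strictly smaller structure; the general extension problem for finite-index subgroups can genuinely fail, and it is precisely the $G$-invariance of $L_H$ established in the first step that makes the extension possible and matches the restriction. The only other point requiring care is checking that $r$ lands in $\AH(H)$, i.e. that the restricted action is still non-elementary, but this is a short and standard argument.
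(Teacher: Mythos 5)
Your proposal is correct and follows essentially the same route as the paper: both take the candidate structure $[X\cup Y]$ produced by the construction in the Minasyan--Osin erratum (whose existence rests on the conjugation-invariance of the largest element of $\AH(H)$), restrict an arbitrary $[Z]\in\AH(G)$ to the finite-index subgroup $H$ via the Svarc--Milnor map, invoke maximality in $\AH(H)$, and transport the resulting comparison back to $G$ using finiteness of the transversal --- your ``order-reflecting restriction map'' is just a packaged form of the paper's explicit estimate with the coarsely $H$-equivariant quasi-isometry. The only quibble is that your verification that the restricted action remains non-elementary is unnecessary (and its argument via independent loxodromics would not literally apply to elementary structures in $\AH(G)$), since membership in $\AH(H)$ as defined in the paper requires only hyperbolicity and acylindricity, not non-elementarity.
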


Note that the groups $G$ and $H$ in the above proposition are quasi-isometric. In particular, the above result shows that the property of being $\AH-$accessible (and hence acylindrically hyperbolic) is preserved in this very special situation of a quasi-isometry. We will also show that the property of being $\H-$accessible can be transferred under similar hypothesis. 

\begin{proposition}\label{adprop} Suppose that a group $G$ contains a normal, finite index subgroup $H$ such that $H$ is $\H-$accessible. Then $G$ is also $\H-$accessible. \end{proposition}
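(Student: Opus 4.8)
The plan is to exhibit the largest element of $\HG$ explicitly, by inducing it from the largest element of $\H(H)$, mirroring the construction used for Proposition \ref{wtp} but only tracking hyperbolicity and coboundedness rather than acylindricity. Let $Y$ be a symmetric generating set of $H$ realizing the largest element $[Y]$ of $\H(H)$, so that $\Ga(H,Y)$ is hyperbolic and $[Z']\preceq[Y]$ for every $[Z']\in\H(H)$, and fix a finite transversal $T\ni 1$ of $H$ in $G$.

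The first key step is to record that $[Y]$ is invariant under conjugation by $G$. Since $H\trianglelefteq G$, each $g\in G$ induces an automorphism $c_g$ of $H$, and $c_g$ acts on the poset $\H(H)$ by $[Z']\mapsto[c_g(Z')]$, preserving both hyperbolicity of the relevant Cayley graphs (as $\Ga(H,c_g(Z'))\cong\Ga(H,Z')$) and the order. Hence $c_g$ permutes $\H(H)$ and must fix its unique largest element, so $[gYg^{-1}]=[Y]$, i.e.\ $gYg^{-1}\sim Y$, for every $g\in G$. In particular $\sup_{y\in Y}|gyg^{-1}|_Y<\infty$ for each fixed $g$; taking the maximum over the finite set $T$ yields a single constant controlling conjugation of $Y$-letters by transversal elements, and this uniformity is what makes the finite-index setting tractable.

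The second step is to build the candidate structure on $G$. The natural candidate generating set is $X=Y_0\cup T$, where $Y_0=\bigcup_{t\in T}tYt^{-1}\sim Y$. Coboundedness of $G\act\Ga(G,X)$ is immediate from $[G:H]<\infty$, since the orbit $H\cdot 1$ is $T$-dense. For hyperbolicity I would realize $\Ga(G,X)$, up to quasi-isometry, as the space obtained by gluing the $|T|$ coset-copies of the hyperbolic space $\Ga(H,Y)$ along the bounded pattern prescribed by the action of $G$ on cosets; the conjugation bound from the previous step is exactly what guarantees that these finitely many hyperbolic pieces are glued in a controlled fashion, so that the union is again hyperbolic and, crucially, that the transversal generators do not distort the $H$-metric. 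This last point is delicate and is where finiteness of $T$ together with the invariance $gY_0g^{-1}\sim Y_0$ are essential: they are needed to ensure that the restriction of $[X]$ to $H$ recovers $[Y]$ rather than a strictly smaller structure.

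The main obstacle is maximality, namely showing $[Z]\preceq[X]$ for every $[Z]\in\HG$. My plan is to pass through the order-preserving restriction map $R\colon\HG\to\H(H)$ sending a hyperbolic structure $[Z]$ on $G$ to the class of the cobounded action $H\act\Ga(G,Z)$ (cobounded because $H$ is $T$-dense, and this orbit metric is realized by a generating set of $H$). Since $[Y]$ is largest in $\H(H)$, we obtain $R([Z])\preceq[Y]=R([X])$ for free; the hard part is to \emph{lift} this inequality back to $G$, deducing $[Z]\preceq[X]$ in $\HG$. I expect this to follow from an adjunction-type property of the restriction/induction pair --- that the induced structure $[X]$ is the largest $G$-structure lying over the $G$-invariant $H$-structure $[Y]$ --- and this is the step requiring the most care, since one must control how the finitely many transversal generators interact with an \emph{arbitrary} hyperbolic generating set $Z$ of $G$, using only the per-element conjugation bounds established above. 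Once this lifting is in place, $[X]$ is the largest element of $\HG$ and $G$ is $\H$-accessible.
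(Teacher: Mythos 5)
Your skeleton coincides with the paper's: use normality plus uniqueness of the largest element of $\H(H)$ to get a uniform conjugation bound over the finite transversal, take the induced generating set (your $Y_0\cup T$ is equivalent to the set the paper uses, namely the largest generating set of $H$ union a transversal), and prove maximality by restricting an arbitrary $[Z]\in\HG$ to $H$ and invoking largestness there. But the two steps that carry all the mathematical weight are stated as expectations rather than proved, and neither can be waved through. The first is the claim that $\Ga(G,Y_0\cup T)$ is hyperbolic and that its restriction to $H$ recovers $[Y]$: both reduce to showing that the inclusion $\Ga(H,Y)\hookrightarrow\Ga(G,Y_0\cup T)$ is an $H$-equivariant quasi-isometry, i.e.\ that the transversal generators do not distort the $H$-metric. ``Gluing finitely many hyperbolic pieces along a bounded pattern'' is not a citable principle; it simply repackages this undistortion claim. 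The paper proves it by an explicit rewriting: for $h\in H$ with a geodesic spelling $a_1\cdots a_n$ over the generating set of $G$, set $w_i=a_1\cdots a_i$, choose transversal elements $t_i$ with $w_it_i^{-1}\in H$ (and $t_0=t_n=1$), and write $h=\prod_{i}\bigl(t_{i-1}a_it_i^{-1}\bigr)$. Each factor lies in $H$; when $a_i\in Y_0$ one checks $t_{i-1}=t_i$, so the factor is a transversal conjugate of $a_i$ and is bounded in $|\cdot|_Y$ by your conjugation constant, and when $a_i\in T$ the factor is one of finitely many elements of $H$, hence also bounded. This yields $|h|_Y\leq M\,|h|_{Y_0\cup T}$, which is the quantitative content your gluing picture omits.

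The second gap is the ``lifting'' step for maximality, which you defer to an unproved adjunction-type property; in fact it is a short direct computation, identical to the one in the proof of Proposition \ref{wtp}, and no new idea is needed. Given $[Z]\in\HG$, the action $H\acts\Ga(G,Z)$ is cobounded because $H$ has finite index, so the Svarc--Milnor map of \cite[Lemma 3.11]{ABO} produces $[W]\in\H(H)$ together with a coarsely $H$-equivariant quasi-isometry $\phi\colon\Ga(G,Z)\to\Ga(H,W)$. Largestness of $[Y]$ gives $\sup_{y\in Y}|y|_W<\infty$, and pushing this bound back through $\phi$ gives $\sup_{y\in Y}|y|_Z<\infty$; the conjugates in $Y_0$ are handled by $Y_0\sim Y$, and the finitely many elements of $T$ contribute a bounded amount automatically. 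That is the entire lift, so the worry about how transversal generators ``interact with an arbitrary $Z$'' is a non-issue. In short, your plan points at the correct proof, but both of the places where you flag delicacy are exactly the places where an argument must be supplied, and as written neither is.
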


Obviously, the Propositions \ref{wtp} and \ref{adprop} show that the properties of being $\H-$accessible and $\AH-$accessible are preserved under finite extensions, which answers an open question from \cite{ABO} (See Problem 8.10). \\

One important implication of being $\AH-$accessible is that the group admits a \emph{universal}action. i.e. there is an acylindrical action of the group on a hyperbolic space such that every generalized loxodromic acts loxodromically in this action (see \cite[Section 7]{ABO} for details). Another motivation for understanding whether a given group is $\H-$ or $\AH-$accessible is that when the answer is affirmative, it provides an obvious candidate for which group action is the most informative in order to better understand certain properties of the group. The largest structure is also preserved by every automorphism of the group, which may be useful for studying the group (see \cite[Section 2.3]{ABO} for details).

\section{\bf Preliminaries}

We quickly recall some standard terminology and definitions from \cite{ABO}, before proceeding with the proofs of the results. 

Throughout this paper, all group actions on metric spaces are assumed to be isometric. Given a metric space $S$, we denote by $\d_S$ the distance function on $S$ unless another notation is introduced explicitly. Given a point $s\in S$ or a subset $R\subseteq S$ and an element $g\in G$, we denote by $gs$ (respectively, $gR$) the image of $s$ (respectively $R$) under the action of $g$. Given a group $G \curvearrowright S$ and some $s\in S$, $Gs$ denotes the $G$-orbit of $s$ under the group action.

Let $X$, $Y$ be two generating sets of a group $G$. We say that $X$ is \emph{dominated} by $Y$, written $X\preceq Y$, if the identity map on $G$ induces a Lipschitz map between metric spaces $(G, \d_Y)\to (G, \d_X)$. This is equivalent to the requirement that $\sup_{y\in Y}|y|_X<\infty$, where $|\cdot|_X=\d_X(1, \cdot)$ denotes the word length with respect to $X$.  It is easy to see that $\preceq$ is a preorder on the set of generating sets of $G$ and therefore it induces an equivalence relation in the standard way:
$$
X\sim Y \;\; \Leftrightarrow \;\; X\preceq Y \; {\rm and}\; Y\preceq X.
$$
This is equivalent to the condition that the Cayley graphs $\Ga(G,X)$ and $\Ga(G, Y)$ are $G$-equivariantly quasi-isometric. We denote by $[X]$ the equivalence class of a generating set $X$, and by $\GG$ the set of all equivalence classes of generating sets of $G$. The preorder $\preceq$ induces an order relation $\preccurlyeq $ on $\GG$ by the rule
$$
[X]\preccurlyeq [Y] \;\; \Leftrightarrow \;\; X\preceq Y.
$$

Note that the above order relation is \emph{inclusion reversing}. i.e. if $ Y \subseteq X$, then $[X] \preccurlyeq [Y]$. This definition is consistent with the following observation$\colon$if we take the generating set $X=G$, then the corresponding Cayley graph is a bounded space of diameter $1$. This Cayley graph retains practically no information about the structure of the group $G$, and so $[G]$ is always the smallest element in $\GG$. 

$\HG$ is the partially ordered set of \emph{hyperbolic structures on a group $G$}. It contains equivalence classes of generating sets $[X]$ such that $\Ga(G, X)$ is a hyperbolic space. The partial order on this poset is the one inherited from $\GG$. The subset $\AH(G) \subset \H(G)$ is the partially ordered set of \emph{acylindrically hyperbolic structures on a group $G$}. It contains equivalence classes of generating sets $[X] \in \H(G)$ such that $G \acts \Ga(G, X)$, is acylindrical.

An element $[X] \in \HG$ (resp. $\AHG$)  is called \emph{largest} if for every $[Y] \in \HG$ (resp. every $[Y] \in \AHG$), we have that $[Y] \preccurlyeq [X]$. It is easy to see that if a largest element exists, it is unique. When $\HG$ (resp. $\AH(G)$) contains the largest element, then we say that the group $G$ is $\H-$accessible (resp. $\AH-$accessible). 

Note that while the question of $\H-$accessibility makes sense for any group, the question whether a group is $\AH-$accessible only makes sense in the case of acylindrically hyperbolic groups. Indeed, if the group $G$ is not acylindrically hyperbolic, then the cardinality of $\AH(G)$ is either $1$ or $2$; in either case the group is $\AH-$accessible (see \cite[Theorem 2.6]{ABO}). Further note that for acylindrically hyperbolic groups, the property of being $\AH-$accessible is strictly stronger than being acylindrically hyperbolic. Indeed, every (non-elementary) $\AH-$accessible group is acylindrically hyperbolic, but there exist acylindrically hyperbolic groups that are not $\AH-$accessible (See \cite[Theorem 2.17]{ABO}).  

For further details of this poset, we refer the reader to \cite[Sections 1,7]{ABO}. In particular, we would like to mention \cite[Theorem 2.19]{ABO}, which proves that the following classes of groups are $\AH-$accessible groups$\colon$ mapping class groups of punctured closed surfaces, right-angled Artin groups (RAAGs) and fundamental groups of compact orientable 3-manifolds with empty or toroidal boundary.

\section{\bf Main Results}

We start by proving Proposition \ref{wtp}. The strategy used in the proof of Proposition \ref{wik} in \cite{errata} is the following$\colon$ Given the largest element $[X] \in \AH(H)$ and a finite set $Y$ of distinct representatives of cosets of $H$ in $G$, the authors show that $[X \cup Y] \in \AH(G)$. Since $H \acts \Ga(H,X)$ is non-elementary, it follows that so is $G \acts \Ga(G, X \cup Y)$. We will show that the structure $[X \cup Y]$ is the largest in $\AH(G)$. We will set this notation for the following proof. 

\begin{proof}[Proof of Proposition \ref{wtp}] Let $[Z]$ be any element in $\AH(G)$. Then $\Ga(G,Z)$ is hyperbolic and $G \acts \Ga(G,Z)$ is an acylindrical action. We will show that $[Z] \preccurlyeq [X \cup Y]$, which will prove the result. 

Since $H \leq G$, $H \acts \Ga(G,Z)$ is also acylindrical. Since $H$ has finite index in $G$, the action is also cobounded. Indeed, the $G$-action is cobounded since we are considering an action on a Cayley graph and we have that $\displaystyle \bigcup_{h \in H} hY =G$. Since $Y$ is finite, it is bounded in $d_Z$. By using the \emph{Svarc-Milnor} map from \cite[Lemma 3.11]{ABO}, we get that there exists $[W] \in \AH(H)$ such that $H \acts \Ga(H, W)$ is equivalent to $H \acts \Ga(G, Z).$ (See \cite{AHO} or \cite[Section 3]{ABO} for further details.)

By definition, this means that there is a coarsely $H-$equivariant quasi-isometry $\phi \colon \Ga(G,Z) \to \Ga(H, W)$. Thus there exists a constant $C$ satisfying the following conditions for every $g \in G$ and every $h \in H$.
\begin{equation}\label{eq1} - C + \dfrac{1}{C} d_Z(1, g) \leq d_W(\phi(1), \phi(g)) \leq Cd_Z(1,g) +C  \end{equation}
\begin{equation} \label{eq2}\displaystyle \op{sup}_{h \in H} \hspace{5pt} d_W(h\phi(g) , \phi(hg)) \leq C \end{equation} 

Since $[X]$ is the largest element in $\AH(H)$, we must have that $[W] \preccurlyeq [X]$. Thus there exists a constant $K$ such that  \begin{equation}\label{eq3}\op{sup}_{x \in X} |x|_W \leq K. \end{equation}

Let $x \in X$. Then $x \in H$ and by using (\ref{eq1}) and (\ref{eq2}) we get that $$d_Z(1 ,x) \leq Cd_W( \phi(1), \phi(x)) + C^2 \leq Cd_W(\phi(1), x\phi(1)) + 2C^2.$$
By using the triangle inequality and (\ref{eq3}), we get that  \begin{align*} d_Z(1 ,x) & \leq Cd_W(\phi(1), 1) + Cd_W(1, x) + Cd_W(x, x\phi(1) \big) + 2C^2\\ & \leq 2Cd_W(1, \phi(1)) + CK + 2C^2.\end{align*} Since $d_W(1, \phi(1))$ is a constant independent of the choice of $x$, we get that $$\op{sup}_{x \in X} |x|_Z < \infty. $$ 
Since $Y$ is finite, it follows that  $\underset{s \in X \cup Y} {\op{sup}}$ $|s|_Z < \infty$. Thus $[Z] \preccurlyeq [X \cup Y]$. \end{proof}

\begin{cor}\label{cor} Right-angled coxeter groups (RACG) that contain a right-angled Artin group (RAAG) as a finite index normal subgroup are $\AH-$accessible.
\end{cor}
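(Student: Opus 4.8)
The plan is to apply Proposition~\ref{wtp}, taking the given RAAG as the subgroup $H$ and the RACG as $G$. By hypothesis $H$ is normal and of finite index in $G$, and by \cite[Theorem 2.19]{ABO} every RAAG is $\AH$-accessible. The only hypothesis of Proposition~\ref{wtp} that is not immediately supplied by the assumptions is that $H$ be acylindrically hyperbolic, so I would split the argument according to whether this holds.

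If $H$ is acylindrically hyperbolic, then all of the hypotheses of Proposition~\ref{wtp} are in place, and the conclusion is immediate: $G$ is $\AH$-accessible.

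If $H$ is \emph{not} acylindrically hyperbolic, I would first observe that $G$ cannot be acylindrically hyperbolic either. This uses the well-known fact that a finite-index subgroup of an acylindrically hyperbolic group is again acylindrically hyperbolic (the restriction of an acylindrical non-elementary action to a finite-index subgroup remains acylindrical, and non-elementary since the limit set is preserved). Taking the contrapositive, a group possessing a non-acylindrically-hyperbolic finite-index subgroup fails to be acylindrically hyperbolic. As recorded in the Preliminaries via \cite[Theorem 2.6]{ABO}, any group $G$ that is not acylindrically hyperbolic satisfies $|\AH(G)| \in \{1,2\}$ and is therefore automatically $\AH$-accessible. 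This disposes of the second case.

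The single point requiring care --- and the reason the case split is genuinely necessary rather than a formality --- is that a RAAG need not be acylindrically hyperbolic, so Proposition~\ref{wtp} cannot be invoked blindly. For instance, $\z^n$ is the RAAG on a complete graph and embeds as a normal finite-index subgroup of a RACG: already $\z^2$ is normal of index $4$ in $D_\infty \times D_\infty$, the RACG on a $4$-cycle, and neither group is acylindrically hyperbolic. Beyond recognizing and handling this degenerate situation, I expect no genuine obstacle.
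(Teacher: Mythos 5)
Your proposal is correct and follows the paper's own route: cite \cite[Theorem 2.19]{ABO} for $\AH$-accessibility of RAAGs and then apply Proposition \ref{wtp}. You are in fact more careful than the paper, whose one-line proof never checks the hypothesis of Proposition \ref{wtp} that the subgroup be acylindrically hyperbolic; your case split --- using that acylindrical hyperbolicity passes to finite-index subgroups, so a non-acylindrically-hyperbolic $H$ forces $G$ to be non-acylindrically-hyperbolic and hence trivially $\AH$-accessible by \cite[Theorem 2.6]{ABO} --- closes that small gap, and your example $\mathbb{Z}^2 \le D_\infty \times D_\infty$ shows the degenerate case genuinely occurs.
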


\begin{proof} By \cite[Theorem 2.19(c)]{ABO}, every RAAG is $\AH$-accessible (where the largest action corresponds to the action on the extension graph). The result now follows from Proposition \ref{wtp}. 
\end{proof} 

Note that by \cite[Lemma 3]{davis}, given any RAAG $H$, there exists a RACG $K$ such that $H \leq K$ is finite index and normal.  Thus by Corollary \ref{cor}, the associated RACG $K$ is $\AH$-accessible. 

Also note that the result that every RACG is $\AH$-accessible is proved in \cite[Theorem A (4)]{ABD}. The proof contained therein uses techniques related to the hierarchically hyperbolic structure of these groups, and shows that the largest action of a RACG corresponds to the action of the group on an altered contact graph. However, in the cases when the RACG contains a RAAG as a finite index normal subgroup, by Corollary \ref{cor}, we may alternatively study the largest action of the RACG by considering the extension of the action of the RAAG on its extension graph (since the two actions are equivalent). 
 
We will now turn our attention to the proof of Proposition \ref{adprop}. The first part of the proof is practically identical to the first part of the proof of \cite[Lemma 6]{errata}, which we reproduce here for the sake of completion. In what follows, we  set the notation that $G$ is a group and $H \leq G$ is a finite index, normal subgroup such that $H$ is $\H-$accessible. 

\begin{proof}[Proof of Proposition \ref{adprop}] Let $[X]$ be the largest element in $\H(H)$, and let $Y$ be a fixed set of distinct representatives of cosets of $H$ in $G$, where the representative of $H$ is $1$. Obviously $Y$ is finite and $X \cup Y$ is a generating set for $G$. 

It follows from Section 2.3 of \cite{ABO} that the action of $Aut(H)$ on $\H(H)$ defined by $\alpha([W]) = [\alpha(W)]$ is a well-defined, order preserving action. Since $[X] \in \H(H)$ is the largest element, we must have $\alpha([X]) = [\alpha(X)]$ for every $\alpha \in Aut(H)$. In particular, since $H$ is normal in $G$ and $Y$ is finite, there exists a constant $L$ such that \begin{equation}\label{eq4}|y^{-1}xy|_X = |y^{-1}x^{-1}y|_X \leq L\end{equation} for all $x \in X$ and for all $y \in Y$. 

We will show that the inclusion map from $\Ga(H, X) \rightarrow \Ga(G, X \cup Y)$ is an $H-$equivariant quasi-isometry.  Obviously this map is $H-$equivariant. To see that the map is coarsely surjective, observe that for every $g \in G$, there exists $h \in H$ and $y \in Y$ such that $g = hy$. Then $$d_{X \cup Y} (g, h) = d_{X \cup Y}(hy, h) =  d_{X \cup Y} (y, 1) \leq 1.$$
Further, we have that for every $h \in H$, $$|h|_{X \cup Y} \leq |h|_X.$$ So it remains to find a constant $M$ such that for every $h \in H$, we have that $|h|_X \leq M |h|_{X \cup Y}$. To this end, let $h \in H$ and let $a_1 a_2....a_n$ be the shortest word in $X^{\pm1} \cup Y^{\pm 1}$ representing $h$. Let $w_0 =1$ and $w_i = a_1a_2...a_i$ for every $i = 1,2,...n$. For every $i$, there exists $y_i \in Y$ such that $w_iy_i^{-1} \in H$. 

Then $$h = (y_0a_1y_1^{-1})(y_1a_2y_2 ^{-1})...(y_{n-2}a_{n-1} y_{n-1} ^{-1})(y_{n-1}a_ny^{-1}_n),$$ where $y_0 =y_n =1$. Observe that \begin{itemize} 
\item[(1)] $y_0a_1 y_1 ^{-1} = w_1 y_1 ^{-1} \in H$
\item[(2)] $y_ja_{j+1}y_{j+1} ^{-1} = (y_j w_j ^{-1})(w_j a_{j+1} y_{j+1} ^{-1}) =  (y_j w_j ^{-1})(w_{j+1}y_{j+1} ^{-1}) \in H$ 
\item[(3)] $y_{n-1}a_ny_n = (y_{n-1} w_{n-1} ^{-1})(w_{n-1}a_n) = (y_{n-1} w_{n-1} ^{-1})w_n \in H$ since $w_n = h \in H$
\end{itemize}

This establishes that $y_{i -1} a_i y_i^{-1} \in H$ for all $i=1,2,...n$. If $a_i \in X$, then $a_i \in H$. By normality, $y_i a_i y_i ^{-1} \in H$. Observe that $$y_{i -1} a_i y_i^{-1} = (y_{i -1}y_i^{-1} ) (y_ia_i y_i^{-1}).$$ Since $y_{i -1} a_i y_i^{-1} \in H$, we must have that $y_{i-1}y_i ^{-1} \in H$, which implies that $y_i H =y_{i-1}H$. Since $Y$ was a collection of distinct coset representatives, this implies that $y_i = y_{i-1}$. It follows from Equation \ref{eq4}, that $|y_{i -1} a_i y_i^{-1}|_X \leq L$.

If $a_i \in Y$, then $|y_{i -1} a_i y_i^{-1}|_Y \leq 3$. Since $Y$ is finite and $y_{i -1} a_i y_i^{-1} \in H$, we can find a constant $N$ (independent of $i$) such that $|y_{i -1} a_i y_i^{-1}|_X \leq N$ in this case as well. Take $M = \op{max}\{L, N\}$. Then $|h|_X \leq M |h|_{X \cup Y}$.  

Since $\Ga(H, X)$ is hyperbolic, so is $\Ga(G, X \cup Y)$. Hence $[X \cup Y] \in \H(G)$. That this is the largest element in $\HG$ follows from an argument almost identical to the one in the proof of Proposition \ref{wtp}, by considering $[Z] \in \HG$ instead (since we do not have to worry about acylindricity in this case). 
\end{proof} 

\bibliographystyle{plain}

\begin{thebibliography}{10}

\smallskip

\bibitem{ABO} C.Abbott, S.Balasubramanya, D.Osin; Hyperbolic structures on groups;  Algebr. Geom. Topol. 19 (2019), no. 4, 1747–1835. 

\smallskip

\bibitem{ABD} C.Abbott, J.Behrstock, M.G.Durham; Largest acylindrical actions and stability in Hierarchically hyperbolic groups;  arXiv:1705.06219. 

\smallskip

\bibitem{AHO} C.Abbott, D.Hume, D.Osin; Extending group actions on metric spaces; arXiv:1703.03010, To appear in J. Topol. Anal.

\smallskip

\bibitem{davis} M.W.Davis, T.Januszkiewicz; Right-angled Artin groups are commensurable with right-angled Coxeter groups; Journal of Pure and Applied Algebra, Volume 153, Issue 3, (2000), 229 - 235. 

\smallskip

\bibitem{drutu} C.Drutu; Relatively hyperbolic groups: geometry and quasi-isometric invariance; Comment. Math. Helv. 84 (2009), no. 3, 503–546.
\smallskip 

\bibitem{gromov} M.Gromov; Hyperbolic groups, in “Essays in Group Theory” (G. M.Gersten, ed.); MSRI Publ. 8, 1987, pp. 75-263.

\smallskip

\bibitem{ori} A.Minasyan, D.Osin; Acylindrical hyperbolicity of groups acting on trees;  Math. Ann. 362 (2015), no. 3-4, 1055–1105. 

\smallskip

\bibitem{errata} A.Minasyan, D.Osin; Correction to: “Acylindrical hyperbolicity of groups acting on trees”; Math. Ann. 373 (2019), no. 1-2, 895–900.
\smallskip

\bibitem{survey} D.Osin, Groups acting acylindrically on hyperbolic spaces;  arXiv:1712.00814v4.

\bibitem{ah} D.Osin; Acylindrically hyperbolic groups; Trans. Amer. Math. Soc. 368 (2016), no. 2, 851–888.

\end{thebibliography}

\end{document}